\theoremstyle{plain}
\newtheorem{theorem}{Theorem}
\newtheorem{lemma}[theorem]{Lemma}
\newtheorem{proposition}[theorem]{Proposition}
\newtheorem{prop}[theorem]{Proposition}
\newtheorem{corollary}[theorem]{Corollary}
\theoremstyle{remark} 
\newtheorem{remark}[theorem]{Remark}
\theoremstyle{definition} 
\newtheorem{question}[theorem]{Question}
\newtheorem{defn}[theorem]{Definition}
\newtheorem{statement}[theorem]{Statement}
\DeclareMathOperator{\BSigma}{B\Sigma}
\DeclareMathOperator{\ACA}{\mathsf{ACA}_0}
\DeclareMathOperator{\RCA}{\mathsf{RCA}_0}
\DeclareMathOperator{\RT}{\mathsf{RT}}
\DeclareMathOperator{\SRT}{\mathsf{SRT}}
\DeclareMathOperator{\COH}{\mathsf{COH}}
\DeclareMathOperator{\CAC}{\mathsf{CAC}}
\DeclareMathOperator{\ADS}{\mathsf{ADS}}
\DeclareMathOperator{\DNR}{\mathsf{DNR}}
\DeclareMathOperator{\WKL}{\mathsf{WKL}_0}
\DeclareMathOperator{\RKL}{\mathsf{RKL}}
\DeclareMathOperator{\RKLp}{\mathsf{RKL}^{(1)}}
\DeclareMathOperator{\RKLa}{\mathsf{RKL}^{(<\omega)}}
\DeclareMathOperator{\N}{\mathbb{N}}
\DeclareMathOperator{\M}{\mathcal{M}}
\newcommand{\Pt}{P^2_2}		% By Yokoyama
\title[A Ramsey-type K\"onig's Lemma]{Reverse mathematics and \\a Ramsey-type K\"onig's Lemma}
\author{Stephen Flood}
\address{Department of Mathematics, University of Notre Dame, 255 Hurley Hall, Notre Dame, IN 46556}
\email{sflood@nd.edu}
\begin{document}

\begin{abstract}
	In this paper, we propose a weak regularity principle which is similar to both weak K\"onig's lemma and Ramsey's theorem.  We begin by studying the computational strength of this principle in the context of reverse mathematics.  
	We then analyze different ways of generalizing this principle.
\end{abstract}	

\thanks{
	Partially supported by EMSW21-RTG 0353748, 0739007, and 0838506.  
	This research will be part of the author's Ph.D. thesis written at Notre Dame under the direction of Peter Cholak.
	Special thanks to Damir Dzhafarov for his suggestions and comments. 
	Many thanks to Keita Yokoyama for his proof of theorem \ref{thm.rklp-implies-srt22}.  
	This research was partly conducted while the author was visiting the Institute for Mathematical Sciences at the  National University of Singapore in 2011.
}

\subjclass[2010]{Primary 03F35, Secondary 03B30.}

\keywords{Ramsey's theorem, weak K\"onig's lemma, reverse mathematics.}

\maketitle

%%%% %%%% %%%% %%%%
\section{Introduction}
%%%% %%%% %%%% %%%%

	Weak K\"onig's lemma states that for any infinite binary tree $T\subseteq 2^{<\N}$, there is an infinite path $p$ through $T$.  
	When formalized in second order arithmetic, this theorem is denoted $\WKL$.  
	There is a direct correspondence between a path $p$ through a binary tree, which is a function $p:\N\rightarrow\{0,1\}$, and the set $\{x : p(x)=1 \}$.  With this in mind, we occasionally identify paths through trees, colorings of singletons, and subsets of $\N$.  

	Ramsey's theorem is a generalization of the pigeon-hole principle.  Given $n\in\N$, let $[\N]^n$ denote the collection of size $n$ subsets of $\N$.  The infinite version of Ramsey's theorem says that for every $n,k\in\N$ and every function (called a coloring) $f:[\N]^n\rightarrow\{0,\dots,k-1\}$, there is an infinite set $H\subseteq\N$ which is given one color by $f$.  In the context of second order arithmetic, this principle is denoted $\RT^n_k$ (with $n,k$ as above). 

	Weak K\"onig's lemma and Ramsey's theorem can be thought of as asserting the existence of different types of regularity.  
	Viewed topologically, weak K\"onig's lemma is essentially the statement ``$2^{\N}$ is compact.''  This carries over to reverse mathematics, where $\WKL$ is equivalent to many theorems about compactness (see \cite{simpson}). 
	For its part, Ramsey's theorem says that no matter how badly behaved a coloring is, it always has a sizable homogeneous set.  In the words of T.S. Motzkin, absolute disorder is impossible.  

\bigskip
	In this paper, we study the computational and reverse mathematical strength of a regularity principle which combines features of weak K\"onig's lemma and Ramsey's theorem.   
	We will refer to the statement ``for each infinite binary tree $T$, there is an infinite set $H$ homogeneous for a path through $T$'' as a Ramsey-type K\"onig's lemma.  
	In statement \ref{statement.RKL}, we give a formal definition of our Ramsey-type K\"onig's lemma, denoted $\RKL$, in terms of finite strings.
	It is the novelty of this principle, rather than the complexity of the proofs, that is the main innovation of this paper.  

	We begin by showing that $\RKL$ is a nontrivial weakening of $\WKL$ and of $\RT^2_2$.  
	More formally, we show that $\SRT^2_2$ implies $\RKL$, that $\WKL$ implies $\RKL$, and that $\RKL$ implies $\DNR$ 
		(unless specified, we always work over $\RCA$).
	Applying results of \cite{CJS} and \cite{liu}, we conclude that $\RKL$ is strictly weaker than $\WKL$ and $\SRT^2_2$.

	In the remaining sections, we state analogs of $\RKL$ for trees generated by infinite sets of strings  ($\RKLp$) and for arithmetically-definable trees ($\RKLa$).  We then study the strength of each principle, and obtain the surprising result that these stronger principles are more closely related to $\RT^2_2$ than to $\WKL$.
	
	We show that $\RT^2_2$ implies $\RKLp$, and that $\RKLp$ implies $\SRT^2_2$.  
	By the main results of \cite{D22-no-low} and \cite{liu}, it follows that $\RKLp$ and $\WKL$ are incomparable.  
	We also show that $\RT^2_2$ does not imply $\RKLa$ and, by using a result of \cite{liu}, we show that $\RKLa$ does not imply $\WKL$.  
	We leave open whether $\RKLa$ implies $\RT^2_2$.
	We summarize our results in figure \ref{fig.summary}.

%%%% %%%% 
\subsection{Working in second order arithmetic}
%%%% %%%% 

We assume that the reader is familiar with the basic definitions and results of computability theory and reverse mathematics.  
For an introduction to reverse mathematics, see chapter I of \cite{simpson}.  
For an introduction to computability theory, see part A of \cite{soare}.

Some care is required to formalize $\RKL$ in $\RCA$.  
Our goal here is to study the computational complexity of the homogeneous set $H$, not of the path $p$.  
While there are computable trees $T$ such that each path through $T$ is reasonably complicated,
	there are paths $p$ with relatively simple homogeneous sets $H$.  
We begin with definitions that allow us to say that $H$ is homogeneous for \emph{some} path through $T$ without explicit reference to a path $p$.  

\begin{defn}[$\RCA$]
\label{defn.homog-for-path}
	$H$ is \emph{homogeneous for $\sigma\in 2^{<\N}$ with color $c\in\{0,1\}$} if 
$\sigma(x)=c$ for each $x\in H$ s.t. $x<|\sigma|$.
	$H$ is \emph{homogeneous for a path through $T$} if $\exists c\in\{0,1\}$ s.t. 
		$H$ is homogeneous for $\sigma$ with color $c$ for arbitrarily long $\sigma\in T$.
\end{defn}

\begin{statement}[$\RCA$]
\label{statement.RKL}
$\RKL$ asserts that 
	``for each infinite binary tree $T$, there is an
	infinite set $H$ which is homogeneous for a path through $T$.''	
\end{statement}

Unless stated otherwise, all strings and trees we consider will be binary ($\{0,1\}$-valued). 
Given $\tau,\sigma\in 2^{<\N}$ we write $\tau\preceq\sigma$ if $\tau$ is an initial segment of $\sigma$.  We write $\sigma\upharpoonright t$ (or $p\upharpoonright t$) to denote the initial segment of $\sigma\in 2^{<\N}$ (or $p\in 2^{\N}$) of length $t$.

%%%% %%%% %%%% %%%%
\section{Reverse mathematics of \texorpdfstring{$\RKL$}{RKL}}
%%%% %%%% %%%% %%%%

\begin{theorem}[$\RCA$]
$\WKL$ implies $\RKL$.
\end{theorem}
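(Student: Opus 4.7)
The plan is to apply $\WKL$ to obtain a path $p$ through $T$, and then to find an infinite homogeneous set for $p$ itself (which automatically gives homogeneity for arbitrarily long initial segments of $p$, all of which lie in $T$).

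More concretely: first, given any infinite binary tree $T$, apply $\WKL$ to produce an infinite path $p \in 2^{\N}$ through $T$. Next, working in $\RCA$, use $\Delta^0_1$ comprehension relative to $p$ to form the two sets
\[
A_0 = \{ x : p(x) = 0 \} \qquad \text{and} \qquad A_1 = \{ x : p(x) = 1 \}.
\]
Since $A_0 \cup A_1 = \N$ and the union of two finite sets is finite (provable in $\RCA$), at least one of $A_0, A_1$ must be infinite. Do a case split: if $A_0$ is infinite, set $H = A_0$ and $c = 0$; otherwise set $H = A_1$ and $c = 1$. (This case split is a purely logical step inside a proof, so it is fine in $\RCA$; it does not require computably deciding which case holds.)

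Finally, verify the definition of homogeneity: for every $t \in \N$, the string $\sigma_t := p \upharpoonright t$ lies in $T$ because $p$ is a path, and by construction $\sigma_t(x) = p(x) = c$ for every $x \in H$ with $x < t$. Hence $H$ is homogeneous for $\sigma_t$ with color $c$, and since $|\sigma_t| = t$ can be taken arbitrarily large, $H$ witnesses $\RKL$ for $T$.

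I do not expect any real obstacle in this proof: $\WKL$ gives the path directly, and the pigeonhole step needed is only $\RT^1_2$, which is provable in $\RCA$ for a fixed number of colors. The only small point to be careful about is to phrase the argument in terms of homogeneity for arbitrarily long $\sigma \in T$ (as in Definition \ref{defn.homog-for-path}) rather than homogeneity for $p$ as a set, but this is immediate from the observation that $p \upharpoonright t \in T$ for each $t$.
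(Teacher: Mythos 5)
Your proof is correct and follows essentially the same route as the paper's: apply $\WKL$ to get a path $p$, apply the pigeonhole principle ($\RT^1_2$, which you prove inline rather than cite) to get an infinite set $H$ on which $p$ is constant, and observe that $H$ is then homogeneous for every initial segment $p\upharpoonright t\in T$. No gaps.
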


\begin{proof}
Given an infinite binary tree $T$, let $p$ be an infinite path through $T$.  Note that $p:\N\rightarrow\{0,1\}$ maps singletons into two colors.  Applying $\RT^1_2$, which is provable in $\RCA$, yields a set $H$ which is homogeneous for $p$.  In particular, $p\upharpoonright t\in T$ and $H$ is homogeneous for $p\upharpoonright t$ for each $t\in\N$.  Thus $H$ satisfies definition \ref{defn.homog-for-path}, as desired. 
\end{proof}

\begin{defn}[$\RCA$]
	A coloring $f:[\N]^2\rightarrow\{0,1\}$ is \emph{stable} if for each $x$, there is some $t>x$ s.t. $(\forall y>t)[f(x,y)=f(x,t)]$.
	$\SRT^2_2$ is the theorem ``every stable coloring of pairs into two colors has an infinite homogeneous set.''
\end{defn}

\begin{theorem}[$\RCA$]
$\SRT^2_2$ implies $\RKL$.
\end{theorem}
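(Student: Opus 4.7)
My plan is to derive $\RKL$ from $\SRT^2_2$ by exhibiting a single stable $2$-coloring of pairs, computable from $T$, whose infinite homogeneous sets are automatically $\RKL$-witnesses. For each $y$, let $\ell_y$ denote the lexicographically least element of $T$ of length $y$; this is well-defined and $T$-computable, since an infinite tree has strings of every length. Define $f:[\N]^2 \to \{0,1\}$ by $f(\{x,y\}) = \ell_y(x)$ whenever $x<y$.

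First I would verify that $f$ is stable. Fix $x$. The key observation is that whenever $x+1 \leq y' \leq y$, the string $\ell_y \upharpoonright y'$ lies in $T$ and has length $y'$, so minimality of $\ell_{y'}$ gives $\ell_{y'} \leq_{\mathrm{lex}} \ell_y \upharpoonright y'$. Truncation to length $x+1$ preserves lex order, so $y \mapsto \ell_y \upharpoonright (x+1)$ is a non-decreasing sequence in the finite lex-ordered set $\{0,1\}^{x+1}$. Regarded as a bounded non-decreasing sequence of naturals, it is eventually constant --- the standard bounded-monotone-convergence fact, available in $\RCA$ via bounded $\Sigma^0_1$-maximization --- so $f(x,y) = \ell_y(x)$ stabilizes in $y$.

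Next I would apply $\SRT^2_2$ to $f$, obtaining an infinite set $H$ and a color $c \in \{0,1\}$ with $\ell_y(x) = c$ for every pair $x<y$ in $H$. Then for each $y \in H$ the string $\ell_y \in T$ has length $y$ and satisfies $\ell_y(x) = c$ for every $x \in H$ with $x<y$; by Definition~\ref{defn.homog-for-path}, $H$ is homogeneous for $\ell_y$ with color $c$. Since $H$ is infinite, the $\ell_y$ for $y \in H$ furnish arbitrarily long $\sigma \in T$ for which $H$ is homogeneous with color $c$, so $H$ is homogeneous for a path through $T$, as required.

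The main obstacle I anticipate is the stability step: one must check that the lex-monotone-convergence argument for $\ell_y \upharpoonright (x+1)$ really is formalizable in $\RCA$ after coding $\{0,1\}^{x+1}$ as an initial segment of $\N$. Everything after that is just unpacking the two notions of ``homogeneous'' in Definition~\ref{defn.homog-for-path}.
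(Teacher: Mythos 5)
Your proposal is correct and takes essentially the same approach as the paper: the identical coloring $f(x,y)=\ell_y(x)$ built from lexicographically least strings of each length, the same application of $\SRT^2_2$, and the same unpacking of Definition~\ref{defn.homog-for-path} at the end. The only divergence is in verifying stability --- the paper finds the lex-least \emph{extendible} node of length $x+1$ and a uniform bound on the dead branches via $\Sigma^0_1$ and $\Pi^0_1$ induction, whereas you observe that $y\mapsto\ell_y\upharpoonright(x+1)$ is lex-non-decreasing and invoke bounded $\Sigma^0_1$-maximization; both arguments go through in $\RCA$.
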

\begin{proof} 
Given an infinite tree $T$, we define a coloring $f:[\N]^2\rightarrow \{0,1\}$ as follows.  For each $y$, let $\sigma_y$ be the lexicographically least element of $T$ of length $y$.  For each $x<y$, define $f(x,y)=\sigma_y(x)$.\par
	We now show that $f$ is a stable coloring.  Fix $x$, and let $T^{ext}$ denote the strings in $T$ that are extended by arbitrarily long strings in $T$.
	For each $\tau\in T-T^{ext}$ of length $x+1$, there is a bound on the length of strings in $T$ extending $\tau$, so there is a least such bound $s_{\tau}$. 
	Note that $s_{\tau}$ is $\Delta^0_1$ definable (with parameters) from $\tau$.  By $\Sigma^0_1$ induction, there is a uniform upper bound $t$ for $\{s_{\tau}: \tau\in 2^{x+1}\ \land\ \tau\in T-T^{ext}\}$. 
	By $\Pi^0_1$ induction, there is a lexicographically least element $\tau_{x+1}\in T^{ext}$ of length $x+1$. 
	Then for each $y>t$, $\sigma_y\upharpoonright (x+1)=\tau_{x+1}$ hence $f(x,y)=\tau_{x+1}(x)$.
In general, for each $x$, $(\exists t)(\forall y>t)[f(x,y)=\tau_{x+1}(x)]$. In other words, $f$ is a stable coloring.\par
	Suppose that $H$ is homogeneous for $f$ with color $c\in\{0,1\}$.  We now show that $H$ is homogeneous for a path through $T$.  
	Fix $t\in\N$.  Because $H$ is an infinite set, there is an element $y\in H$ with $y\geq t$.  By the definition of $f$, $(\forall x<y)[\sigma_y(x)=f(x,y)]$.  Because $H$ is homogeneous for $f$ with color $c$ and because $y\in H$, $(\forall x<y)[x\in H\implies\sigma_y(x)=c]$.  Then $H$ is homogeneous for $\sigma_y \in T$ with color $c$.  Since $t$ is arbitrary and $|\sigma_y|\geq t$, we have satisfied definition \ref{defn.homog-for-path}.
\end{proof}

\begin{corollary}[$\RCA$]
$\RKL$ does not imply $\SRT^2_2$ or $\WKL$. 
\end{corollary}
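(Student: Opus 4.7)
The plan is to deduce both non-implications by using the two preceding theorems as bridges: any $\omega$-model of $\WKL$ is a model of $\RKL$, and any $\omega$-model of $\SRT^2_2$ is a model of $\RKL$. It then suffices to quote the known $\omega$-model separations in the opposite direction.

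For $\RKL\not\Rightarrow\WKL$, I would invoke Liu's theorem \cite{liu}: there is an $\omega$-model $\M_1$ of $\RT^2_2$ which contains no set of PA degree over any of its members, so that $\WKL$ fails in $\M_1$. Since $\RT^2_2$ implies $\SRT^2_2$, and $\SRT^2_2\Rightarrow\RKL$ by the preceding theorem, we obtain $\M_1 \models \RKL + \neg\WKL$.

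For $\RKL\not\Rightarrow\SRT^2_2$, I would appeal to \cite{CJS}, which (via a $\Delta^0_2$ set admitting no infinite low subset of itself or its complement, combined with iterated use of the low basis theorem) produces an $\omega$-model $\M_2$ of $\WKL$ in which $\SRT^2_2$ fails. Since $\WKL\Rightarrow\RKL$ by the first theorem of this section, $\M_2 \models \RKL + \neg\SRT^2_2$. The argument encounters no genuine technical obstacle, since it consists only in chaining two established implications with two quoted $\omega$-model constructions; the only point requiring care is to verify that the cited models really are $\omega$-models of $\RCA$, so that the bridging implications $\WKL\Rightarrow\RKL$ and $\SRT^2_2\Rightarrow\RKL$ transfer to them unchanged.
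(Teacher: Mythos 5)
Your proposal is correct and follows essentially the same route as the paper: it chains the two bridging implications $\SRT^2_2\Rightarrow\RKL$ and $\WKL\Rightarrow\RKL$ with Liu's separation of $\RT^2_2$ (hence $\SRT^2_2$) from $\WKL$ and the Seetapun/Cholak--Jockusch--Slaman separation of $\WKL$ from $\SRT^2_2$. The only cosmetic difference is that you spell out the underlying $\omega$-model constructions, while the paper simply quotes the non-implications.
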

\begin{proof}
By the main result of \cite{liu}, $\SRT^2_2$ does not imply $\WKL$. 
Because $\SRT^2_2$ implies $\RKL$, $\RKL$ cannot imply $\WKL$. 
Similarly, $\RKL$ cannot imply $\SRT^2_2$ over $\RCA$ because $\WKL$ does not imply $\SRT^2_2$ (by Theorem 3.3 of \cite{seetapun} and Theorem 10.5 of \cite{CJS}).
\end{proof}

We conclude our analysis of $\RKL$ by showing that it is not provable in $\RCA$, 
and by showing that $\RKL$ is strong enough to imply $\DNR$.
When $T\subseteq 2^{<\N}$ is a tree, $[T]\subseteq 2^{\N}$ will be the set of infinite paths through $T$.
The following lemma follows from the proof of lemma 2 in \cite{pi-bi-immune}.
				
\begin{lemma}[Jockusch, \cite{pi-bi-immune}]
\label{lemma.immune-tree}
There is an infinite computable tree $T$ such that for any $p\in [T]$ and for any $e\in\N$, 
if $|W_e|\geq e+3$ then $W_e$ is not homogeneous for $p$.
	In fact, for each $e\in\N$, there is a $t\in\N$ s.t. if $|W_e|\geq e+3$ then $W_e$ is not homogeneous for any string in $T$ of length greater than $t$.
\end{lemma}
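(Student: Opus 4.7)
The plan is a direct diagonalization: for each $e$ such that $|W_e|\ge e+3$, I would commit to a size-$(e+3)$ finite set $F_e\subseteq W_e$, namely the first $e+3$ elements enumerated into $W_e$ (defined at the stage $s_e$ when the $(e+3)$rd element enters), and then require every sufficiently long $\sigma\in T$ to take both values on $F_e$. Formally, I would define
\[
T=\{\sigma\in 2^{<\N} : \text{for every } e,\ s_e\le|\sigma| \text{ and } F_e\subseteq\{0,\dots,|\sigma|-1\}\ \Longrightarrow\ \sigma\text{ is non-constant on } F_e\}.
\]
This set is computable because the hypothesis is uniformly decidable given $\sigma$ (the enumeration of $W_e$ is simulated for $|\sigma|$ steps, and only finitely many $e$ can trigger a constraint). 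It is closed under taking initial segments, because any constraint active for $\tau\preceq\sigma$ is also active for $\sigma$, and $\tau$ inherits the values of $\sigma$ on $F_e$. So $T$ is a computable tree.

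The step I expect to be the main obstacle is verifying that $T$ is infinite; this is precisely where the threshold $e+3$ is essential. At any length $n$, only finitely many constraints can be active, and for each active $e$ the number of $\sigma\in 2^n$ that are constant on $F_e$ is exactly $2\cdot 2^{n-|F_e|}=2^{n-e-2}$. Summing over $e\ge 0$ gives a geometric series bounded by $\sum_{e\ge 0}2^{n-e-2}=2^{n-1}$, so at least $2^{n-1}\ge 1$ strings of length $n$ survive. Hence $T$ contains strings of every length and is therefore infinite.

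The remainder is a quick verification. For any $e$ with $|W_e|\ge e+3$, let $t=\max(s_e,\max(F_e))$. For every $\sigma\in T$ of length greater than $t$, both triggers in the definition of $T$ are satisfied, so $\sigma$ takes both values on $F_e$; since $F_e\subseteq W_e\cap\{0,\dots,|\sigma|-1\}$, this shows $W_e$ is not homogeneous for $\sigma$, giving the ``in fact'' clause. Because every sufficiently long initial segment of a path $p\in[T]$ lies in $T$, no such $p$ witnesses homogeneity of $W_e$ in the sense of definition \ref{defn.homog-for-path}, which yields the main conclusion.
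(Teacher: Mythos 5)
Your construction is correct: the tree is computable and downward closed, the union bound $\sum_{e\ge 0}2\cdot 2^{-(e+3)}=\tfrac12<1$ is exactly why the threshold $e+3$ works, and the choice $t=\max(s_e,\max F_e)$ gives the ``in fact'' clause. This is essentially the same counting diagonalization as in Jockusch's Lemma~2, which the paper cites without reproducing, so your argument matches the intended proof.
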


A simple corollary is that $\RCA\not\vdash \RKL$, via an $\omega$-model.  
Adapting the proof of Theorem 2.3 from \cite{combprinciples2}, we can obtain a slightly stronger result.
We say that a function $f$ is \emph{diagonally non-computable} relative to $X$ if $f(e)\neq\Phi^X_e(e)$ for each $e$ s.t. $\Phi^X_e(e)\downarrow$.
The principle $\DNR$ asserts that for any set parameter $X$, there is a function that is diagonally non-computable relative to $X$. 

\begin{theorem}[$\RCA$]
$\RKL$ implies $\DNR$.
\end{theorem}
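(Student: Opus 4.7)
The plan is to produce, for any parameter $X$, a function DNR relative to $X$ by applying $\RKL$ to the relativization of Jockusch's tree from Lemma \ref{lemma.immune-tree} and then diagonalizing against the universal $X$-computable function using the resulting homogeneous set.

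First, I would observe that Jockusch's construction in Lemma \ref{lemma.immune-tree} is uniformly effective, so it relativizes to any set $X$ to yield an infinite $X$-computable binary tree $T^X$ with the following property: for each $e$, there is a $t_e \in \N$ such that if $|W_e^X| \geq e+3$ then $W_e^X$ is not homogeneous for any $\sigma \in T^X$ of length greater than $t_e$. Next I would apply $\RKL$ to $T^X$ (with $X$ as a parameter, which $\RCA$ permits) to obtain an infinite set $H$ homogeneous for a path through $T^X$, witnessed by some color $c \in \{0,1\}$, and enumerate $H$ in increasing order as $h_0 < h_1 < \cdots$.

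The main step is to extract the DNR function from $H \oplus X$ by a diagonal argument. Using the $s$-$m$-$n$ theorem, fix a primitive recursive function $\pi$ such that $W^X_{\pi(e)}$ equals the finite set $D_{\Phi^X_e(e)}$ (with canonical index $\Phi^X_e(e)$) whenever $\Phi^X_e(e)$ converges. Define $f\colon \N\to\N$ computably from $H \oplus X$ by letting $f(e)$ be the canonical index of $\{h_0, h_1, \ldots, h_{\pi(e)+2}\}$. To verify that $f$ is DNR relative to $X$, suppose $\Phi^X_e(e)\downarrow$ equals $f(e)$; then $W^X_{\pi(e)}$ is exactly the set $\{h_0,\ldots,h_{\pi(e)+2}\}$, which has size $\pi(e)+3$, so the relativized lemma forbids it from being homogeneous for any string in $T^X$ of length greater than $t_{\pi(e)}$. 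On the other hand, since $H$ is homogeneous with color $c$ for arbitrarily long strings in $T^X$, there exists some $\sigma \in T^X$ with $|\sigma|$ exceeding both $t_{\pi(e)}$ and $h_{\pi(e)+2}$, and then $\{h_0,\ldots,h_{\pi(e)+2}\} \subseteq H$ inherits the same homogeneity witness $c$ because every one of its elements lies in $H$ and below $|\sigma|$, giving the desired contradiction.

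The main obstacle I anticipate is verifying that the relativization of Lemma \ref{lemma.immune-tree} and the $s$-$m$-$n$ construction of $\pi$ can be carried out inside $\RCA$. The induction needs of the tree construction remain at the $\Sigma^0_1$ and $\Pi^0_1$ level (as in the given proof of that lemma), and $s$-$m$-$n$ is formalizable in $\RCA$, so this should not be a serious obstacle, but care is warranted to use only the induction available.
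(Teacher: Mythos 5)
Your proof is correct and follows essentially the same route as the paper: apply $\RKL$ to (the relativization of) Jockusch's tree from Lemma \ref{lemma.immune-tree} and diagonalize using finite subsets of $H$ of size $e+3$, which the lemma forbids from being large homogeneous c.e.\ sets. The only cosmetic difference is that the paper constructs a fixed-point-free function $g$ with $W_{g(e)}\neq W_e$ and cites the standard FPF-to-DNR reduction (Soare V.5.8), whereas you inline that reduction via the $s$-$m$-$n$ function $\pi$ to produce the DNR function directly.
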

\begin{proof}
We work relative to a set parameter $X$. The proof of lemma 2 of \cite{pi-bi-immune} (lemma \ref{lemma.immune-tree} above) works in $\RCA$.
Let $T$ be the tree defined in this proof.  
By $\RKL$, there is a set $H$ homogeneous for a path through $T$.  
Note that there is a $\Delta^0_1$ definable function $g:\N\rightarrow\N$ such that $W_{g(e)}$ is the least $e+3$ elements of $H$ (in the $\N$ order).  \par
	We now show that $g$ is a fixed point free function.  If $|W_e|<2^{e+1}$, then $|W_{g(e)}|\neq|W_e|$.  
	Suppose that $|W_e| \geq 2^{e+1}$.  
	By the above lemma, there is some $t$ s.t. $W_e$ is not homogeneous for any $\sigma\in T$ of length greater than $t$.  
	Because $W_{g(e)}\subset H$ and because $H$ is homogeneous for a path through $T$, $W_{g(e)}$ is homogeneous for arbitrarily long $\sigma\in T$.  In particular $W_e\neq W_{g(e)}$.  
	In other words, $g$ is fixed point free, so can be used to give a $\Delta^0_1$ definition for a DNR function (formalize V.5.8 of \cite{soare} in $\RCA$).
\end{proof}

\begin{question}
Does $\DNR$ imply $\RKL$?
\end{question}

A number of principles are known to be stronger than $\DNR$, such as $\mathsf{ASRAM}$ and $\mathsf{ASRT^2_2}$ from \cite{ramseymeasure}.  Proving that one of these principles does not imply $\RKL$ would separate $\DNR$ from $\RKL$.

%%%% %%%% %%%% %%%%
\section{Trees generated by sets of strings}
%%%% %%%% %%%% %%%%

\begin{defn}
Given an infinite set of strings $\Sigma$, let $T_{\Sigma}$ denote the downward closure of $\Sigma$.  More formally $T_{\Sigma}=\{\tau : (\exists \sigma\in\Sigma)[\tau\preceq\sigma]\}$.
\end{defn}

\begin{statement}[$\RCA$]
$\RKLp$ asserts that 
	``for each infinite set of strings $\Sigma$, 
	there is a set $H$ which is homogeneous for a path through $T_{\Sigma}$.''
\end{statement}

Note that if $\Sigma$ is an infinite computable set of strings, $T_{\Sigma}$ is an infinite c.e. tree.  
In \cite{boolalg-and-cetree}, Downey and Jockusch note that each infinite $\Pi^{0,\emptyset'}_1$-class can be generated by a c.e. tree.  We extend this slightly, to further motivate our definition of $\RKLp$.

\begin{prop}
\label{lemma.pi2-trees}
If $T$ is an infinite $\Pi^0_2$ tree, then there is an infinite computable set of strings $\Sigma$ s.t. $[T]=[T_{\Sigma}]$. 
Furthermore, $\Sigma$ can be taken to contain exactly one string of each length.
\end{prop}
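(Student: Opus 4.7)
My plan is to place $T$ in the $\Pi^0_2$ normal form $T = \bigcap_n V_n$, where $(V_n)$ is a uniformly computable, decreasing sequence of infinite c.e.\ trees, each containing $T$ and hence having strings of every length. From this I aim to construct, uniformly computably in $n$, a string $\sigma_n \in V_n$ of length $n$; then $\Sigma = \{\sigma_n : n \in \N\}$ is a computable set with exactly one string of each length, so it will suffice to check $[T]=[T_\Sigma]$. The strategy is to arrange (i) $\sigma_n \in V_n$ for every $n$, which will force $[T_\Sigma] \subseteq [T]$, and (ii) every $\tau \in T^{ext}$ (i.e.\ every $\tau$ extendable to some $p \in [T]$) is a prefix of $\sigma_n$ for infinitely many $n$, which will give $[T] \subseteq [T_\Sigma]$.

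To arrange (i) and (ii), fix a length-lex enumeration $\tau_0, \tau_1, \ldots$ of $2^{<\N}$ and a computable bijection $\pi: \N \to \N \times \N$ such that $\pi(n) = (i,j)$ implies $|\tau_i| \leq n$. At stage $n$ with $\pi(n) = (i,j)$, I compute $\sigma_n$ by dovetailing two searches inside the c.e.\ enumeration of $V_n$: search $\mathrm{A}$ waits for a length-$n$ extension of $\tau_i$, while search $\mathrm{B}$ waits for any length-$n$ string (and is guaranteed to halt because $V_n$ is infinite). With a prescribed computable budget on $\mathrm{A}$, output $\mathrm{A}$'s witness if it appears within the budget, and $\mathrm{B}$'s witness otherwise. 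This gives a total computable map $n \mapsto \sigma_n$ with $\sigma_n \in V_n$ for every $n$.

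The verification $[T_\Sigma] = [T]$ is then routine. For $[T_\Sigma] \subseteq [T]$: for any $p \in [T_\Sigma]$, each $m$, and each $k$, there is $n \geq k$ with $\sigma_n \upharpoonright m = p \upharpoonright m$; since $\sigma_n \in V_n \subseteq V_k$ and $V_k$ is a tree, $p \upharpoonright m \in V_k$ for every $k$, so $p \upharpoonright m \in T$ and $p \in [T]$. For $[T] \subseteq [T_\Sigma]$: given $p \in [T]$ and $m$, the prefix $p \upharpoonright m$ is some $\tau_i \in T^{ext}$, so by (ii) there are infinitely many $n \geq m$ with $\sigma_n \succeq \tau_i$; hence $p \upharpoonright m \in T_\Sigma$ for every $m$, and $p \in [T_\Sigma]$.

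The main obstacle is actually establishing (ii): naive dovetailing lets search $\mathrm{B}$ always win, since its target is strictly weaker than $\mathrm{A}$'s (any length-$n$ extension of $\tau_i$ is also a length-$n$ string in $V_n$). Overcoming this requires priority-style scheduling that gives $\mathrm{A}$ enough time to beat the fallback at cofinally many of the stages $\pi$ assigns to $\tau_i$, for each fixed $\tau_i \in T^{ext}$. Because $\pi$ devotes infinitely many stages to each $i$, and for $\tau_i \in T^{ext}$ the length-$n$ prefix of any infinite extension of $\tau_i$ in $T \subseteq V_n$ witnesses that $\mathrm{A}$ does eventually halt at every such stage, an appropriate computable budget---or alternatively an explicit set of ``no-deadline'' stages reserved for each $i$---suffices. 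This priority bookkeeping, rather than the two-inclusion verification, is where the real work of the proof lies.
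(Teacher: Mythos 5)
Your overall architecture (reduce to a uniformly c.e.\ decreasing sequence of infinite trees $V_n$ with $T=\bigcap_n V_n$, build $\Sigma=\{\sigma_n\}$ with $|\sigma_n|=n$, and deduce the two inclusions from (i) $\sigma_n\in V_n$ and (ii) every extendable $\tau$ lies below infinitely many $\sigma_n$) is sound, and your verification that (i) and (ii) together give $[T]=[T_\Sigma]$ is correct. But the construction that is supposed to achieve (i) and (ii) simultaneously---which you yourself flag as ``where the real work of the proof lies''---is not actually supplied, and neither of the two mechanisms you gesture at can be made to work. A fixed (or even adaptively computed) total computable budget for search $\mathrm{A}$ fails: for an extendable $\tau_i$, the first length-$n$ extension of $\tau_i$ to be enumerated into $V_n$ can be delayed past any budget chosen in advance of the enumeration. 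Concretely, take $T=V_n=\{\text{all strings beginning with }0\}\cup\{1^k:k\in\N\}$ and enumerate $1^k$ into $V_n$ only at stage $B(n)+k+1$, where $B$ is your budget; then at every stage the fallback $\mathrm{B}$ wins with a $0$-string, the path $111\cdots$ is lost, and $[T]\neq[T_\Sigma]$. The alternative of reserving ``no-deadline'' stages for each $i$ fails for the opposite reason: if $\tau_i$ happens to have no length-$n$ extension in $V_n$ at a reserved stage (which can occur since $V_n$ properly contains $T$ and is only c.e., so you cannot test for this), search $\mathrm{A}$ diverges and $\Sigma$ is not total, hence not computable. The obstruction is your insistence that $\sigma_n$ itself be a \emph{confirmed} member of $V_n$: you must wait for a full length-$n$ witness to be enumerated, and that wait is not computably bounded uniformly in $\tau_i$.

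The paper's proof avoids exactly this. It first replaces the $\Pi^0_2$ tree by a single $\Sigma^0_1$ tree $S$ with $[S]=[T]$ (by bounding the inner existential quantifier over all prefixes), and then lets the \emph{enumeration drive the assignment}: at the stage $s$ when a string $\tau$ enters $S$, it puts into $\Sigma$ an \emph{arbitrary} length-$s$ extension of $\tau$, with no requirement that this extension belong to $S$. This makes totality trivial and gives $S\subseteq T_\Sigma$ (hence $[S]\subseteq[T_\Sigma]$) for free; the price is that your clean argument for the other inclusion is lost, and is replaced by a counting argument showing $T_\Sigma^{ext}\subseteq S^{ext}$: a string $\rho$ with arbitrarily long extensions in $T_\Sigma$ must be a prefix of infinitely many of the distinct confirmed strings $\tau$, only finitely many of which can be shorter than $\rho$. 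If you want to salvage your write-up, the essential missing idea is this relaxation---pad an already-confirmed short string rather than search for a confirmed long one---together with the counting argument that the padding does not introduce new infinite paths.
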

\begin{proof}
It suffices to consider $\Sigma^0_1$ trees.
To see this, suppose that $T$ is $\Pi^0_2$.  Then there is a formula $\phi$ which is $\Delta^0_1$ s.t. $\tau\in T\ \leftrightarrow\ (\forall y)(\exists z)\phi(\tau,y,z)$.
	Using the $\Delta^0_1$ formula $\psi(\tau, \hat{z}) =_{def} (\forall x,y\leq |\tau|)(\exists z<\hat{z})\phi(\tau\upharpoonright x,y,z)$, 
we can define a $\Sigma^0_1$ tree $S$ by $\tau\in S$ $\leftrightarrow$ $(\exists \hat{z}) \psi(\tau,\hat{z})$.
Then $[S]$ $=$ $\{f: (\forall w)(\exists \hat{z})\psi(f\upharpoonright w,\hat{z}) \}$ 
$=$ $\{f: (\forall x)(\forall y)(\exists z)\phi(f\upharpoonright x,y,z) \}$
$=$ $[T]$, so we may work with $S$ instead.

Given a $\Sigma^0_1$ tree $T$, fix a computable enumeration $\{T_s\}$ of $T$.  
If necessary, we computably modify the enumeration to ensure that 
	no $\tau$ enters $T_s$ until $s\geq |\tau|$ and that
	exactly one string enters $T$ at each stage.
We computably enumerate the elements of $\Sigma$ in increasing order. 
At stage $s>0$, find $\tau\in T_s-T_{s-1}$, take one $\sigma\succeq\tau$ with $|\sigma|=s$ 
	(the specific choice is not important), and put $\sigma$ into $\Sigma$.
It is not difficult to show that $T\subseteq T_{\Sigma}$ and that $T_{\Sigma}^{ext}\subseteq T^{ext}$.  It follows that $[T]=[T_{\Sigma}]$. 
\end{proof}

We now examine the strength of $\RKLp$.

\begin{theorem}[$\RCA$]
\label{thm.rt-implies-RKLp}
$\RT^2_2$ implies $\RKLp$.
\end{theorem}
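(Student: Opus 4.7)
The plan is to mimic the strategy used in the proof that $\SRT^2_2$ implies $\RKL$, but without needing the coloring to be stable, so that we may invoke the full strength of $\RT^2_2$. Given an infinite set of strings $\Sigma$, I will define a coloring $f : [\N]^2 \to \{0,1\}$ from $\Sigma$, apply $\RT^2_2$ to extract an infinite homogeneous set $H$, and verify that $H$ is homogeneous for a path through $T_{\Sigma}$.

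First, for each $y$ I would define $\sigma_y$ to be the element of $\Sigma$ of least code with $|\sigma_y| > y$. Since there are only $2^{y+1}-1$ strings of length $\leq y$ and $\Sigma$ is infinite, such an element exists, and since $\Sigma$ is given as a set parameter the assignment $y \mapsto \sigma_y$ is $\Delta^0_1$ in $\Sigma$. Then for $x < y$ put $f(x,y) := \sigma_y(x)$, which makes sense since $|\sigma_y| > y > x$. Apply $\RT^2_2$ to obtain an infinite set $H$ and a color $c \in \{0,1\}$ with $f$ constantly $c$ on $[H]^2$.

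To verify $H$ witnesses $\RKLp$ for $\Sigma$, fix $t \in \N$ and choose any $y \in H$ with $y \geq t$. Let $\tau := \sigma_y \upharpoonright y$, which is an initial segment of $\sigma_y \in \Sigma$ and therefore lies in $T_{\Sigma}$, with $|\tau| = y \geq t$. For each $x \in H$ with $x < |\tau| = y$, homogeneity of $H$ yields $\tau(x) = \sigma_y(x) = f(x,y) = c$. Hence $H$ is homogeneous for $\tau$ with color $c$ in the sense of Definition \ref{defn.homog-for-path}, and since $t$ was arbitrary, $H$ is homogeneous for a path through $T_{\Sigma}$.

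The main subtlety I anticipate is a mismatch between the length of $\sigma_y$ and the range of $x$'s controlled by $H$-homogeneity of $f$: the coloring only records $\sigma_y(x)$ for $x < y$, so $\RT^2_2$ guarantees agreement only on positions below $y$, not on all positions below $|\sigma_y|$. The resolution is to truncate, using $\tau = \sigma_y \upharpoonright y$ rather than $\sigma_y$ itself; $\tau$ still lies in $T_{\Sigma}$ and its length tends to infinity as $y$ ranges over $H$. This leverages the flexibility of Definition \ref{defn.homog-for-path}, which requires homogeneity for arbitrarily long strings in the tree rather than for any fixed path.
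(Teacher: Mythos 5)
Your proof is correct and follows essentially the same route as the paper's: define $f(x,y)=\sigma_y(x)$ for a canonically chosen $\sigma_y\in\Sigma$ of length at least $y$, apply $\RT^2_2$, and observe that $H$ is homogeneous for the truncations $\sigma_y\upharpoonright y\in T_{\Sigma}$ for $y\in H$. The only difference is the (immaterial) selection rule for $\sigma_y$ (least code versus lexicographically least of minimal admissible length), and the truncation step you flag as the ``main subtlety'' is exactly the one the paper uses.
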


\begin{proof}
Fix an infinite set of strings $\Sigma$.  
For each $y$, let $l\geq y$ be the length of the shortest string in $\Sigma$ of length at least $y$.  
Let $\sigma_y$ be the lexicographically least string in $\Sigma$ of length $l$.  
We now define a coloring $f:[\N]^2\rightarrow\{0,1\}$ as before.  
For each $x<y\in\N$, set $f(x,y)=\sigma_y(x)$.  Note that $f$ is $\Delta^0_1$-definable.

By $\RT^2_2$, there is an infinite set $H$ homogeneous for $f$ with color $c\in\{0,1\}$.  
We claim that $H$ is homogeneous for a path through $T_{\Sigma}$.

Fix $t\in\N$.  
Because $H$ is infinite, there is some $y\in H$ with $y\geq t$.
By definition of $f$, $(\forall x<y)[f(x,y)=\sigma_y(x)]$.
Because $H$ is homogeneous for $f$ with color $c$ and because $y\in H$,
	$(\forall x<y)[x\in H \implies \sigma_y(x)=c]$.
In other words, $H$ is homogeneous for $\sigma_y\upharpoonright y$ with color $c$.
Because $\sigma_y\upharpoonright y\in T_{\Sigma}$ and because $y\geq t$ with $t$ arbitrary, 
$H$ is homogeneous for a path through $T_{\Sigma}$ (in the sense of definition \ref{defn.homog-for-path}). 
\end{proof}

\begin{remark}
The coloring defined in the proof of theorem \ref{thm.rt-implies-RKLp} is not (necessarily) stable because it is defined in terms of $\Sigma$, and not in terms of $T$.  For example, suppose that $\sigma(0)=0$ for even length strings $\sigma\in\Sigma$, and that $\sigma(0)=1$ for odd length strings.  Then $\lim_y f(0,y)$ does not exist. 
\end{remark}

There is a natural correspondence between computable colorings $f:[\N]\rightarrow\{0,1\}$ and computable sets $\Sigma\subset 2^{<\N}$ that contain exactly one string of each length.  Given $f$, simply define $\Sigma=\{\sigma_y : \sigma_y\in 2^y\ \land\ (\forall x<y)[\sigma_y(x)=f(x,y)]\}$.   
Using the induced tree $T_{\Sigma}$, it is not difficult to show that $\RKLp$ implies $\SRT^2_2$ over $\RCA+\BSigma^0_2$. 
Yokoyama was able to eliminate the use of $\BSigma^0_2$ by introducing the following  principle.

\begin{statement}[$\RCA$]
$\Pt$ asserts that 
	``for any $\Pi^0_2$-definable subsets $A_0,A_1$ of $\N$ s.t. $A_0\cup A_1=\N$,
	there exists an infinite set $H\in S(\mathcal{M})$ s.t. $H\subseteq A_0$ or $H\subseteq A_1$.''
\end{statement}

The principle $\Pt$ is particularly useful because it implies the better understood principle $D^2_2$.  This is a special instance of the principle $D^n_2$, which we will return to in the next section.

\begin{statement}[$\RCA$]
For each $n\in\omega$, $D^n_2$ asserts that ``for each $\Delta^0_n$-definable subset $A$ of $\N$, there exists an infinite set $H\in S(\M)$ s.t. $H\subseteq A$ or $H\subseteq\overline{A}$.''  
\end{statement}

\begin{theorem}[Cholak, Chong, Jockush, Lempp, Slaman, Yang \cite{CJS,PART}]
Over $\RCA$, $D^2_2$ is equivalent to $\SRT^2_2$.
\end{theorem}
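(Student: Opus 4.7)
The plan is to prove the two directions separately. For $\SRT^2_2 \Rightarrow D^2_2$, fix a $\Delta^0_2$ set $A$ and, using the standard limit-lemma formalized in $\RCA$, obtain a computable function $g(x,s)$ with $\lim_s g(x,s) = \chi_A(x)$. Define a coloring $f:[\N]^2 \to \{0,1\}$ by $f(x,y) = g(x,y)$ for $x < y$. Stability is immediate: for each $x$, the limit $\lim_y f(x,y) = \chi_A(x)$ exists. Apply $\SRT^2_2$ to obtain an infinite homogeneous set $H$ for $f$ with some color $c \in \{0,1\}$. For each $x \in H$ take any sufficiently large $y \in H$; then $\chi_A(x) = \lim_y f(x,y) = c$, so (modulo trimming one element) $H \subseteq A$ if $c = 1$ and $H \subseteq \overline{A}$ if $c = 0$.

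For the reverse direction $D^2_2 \Rightarrow \SRT^2_2$, let $f:[\N]^2 \to \{0,1\}$ be a stable coloring. Define $A = \{x : (\exists t)(\forall y>t)[f(x,y)=0]\}$. Stability implies that $\overline{A}$ has the analogous $\Sigma^0_2$ definition (with color $1$), so $A$ is $\Delta^0_2$. Apply $D^2_2$ to obtain an infinite set $H_0$ contained in $A$ or in $\overline{A}$; without loss of generality $H_0 \subseteq A$. Every $x \in H_0$ has a least stabilization time $t(x)$ such that $f(x,y) = 0$ for all $y > t(x)$. We now extract a genuinely homogeneous subset $H \subseteq H_0$ by a greedy construction: list $H_0$ in increasing order as $\{y_0 < y_1 < \cdots\}$, set $x_0 = y_0$, and having defined $x_0 < \cdots < x_i$, let $x_{i+1}$ be the least $y_j \in H_0$ with $y_j > t(x_i)$. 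Then $H = \{x_i : i \in \N\}$ is infinite and $f$-homogeneous with color $0$.

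The key subtlety is showing that this greedy construction goes through in $\RCA$. Under $\BSigma^0_2$, the function $t(x)$ restricted to any initial segment of $H_0$ is bounded, so $H$ can be defined by $\Delta^0_1$ recursion relative to $H_0$ and the stabilization data, and the construction is immediate. Without $\BSigma^0_2$, however, the sequence $t(x_0), t(x_1), \ldots$ may grow faster than any provably total function, and one cannot directly justify that the recursion produces a set in the sense of $\RCA$. The main obstacle, and the content of the Chong--Lempp--Yang contribution, is to overcome this by a more delicate model-theoretic argument: in a model where $\BSigma^0_2$ fails one works with a $\Sigma^0_2$-cut and uses a nonstandard refinement of the greedy step to produce $H$ inside the model anyway.

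I would cite \cite{CJS} for the forward direction together with the easy case of the reverse direction under $\BSigma^0_2$, and \cite{PART} for the removal of the bounding hypothesis, which is the hard part of the theorem. The proof in our setting does not need to rework either component: once the equivalence is in hand it will be invoked as a black box in what follows.
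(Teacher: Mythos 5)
The paper offers no proof of this theorem at all---it is stated purely as a cited result of Cholak--Jockusch--Slaman \cite{CJS} and Chong--Lempp--Yang \cite{PART}, and is used as a black box. Your plan to do the same for the hard part is therefore consistent with the paper, and your sketches of the two easy directions are essentially the standard arguments and are correct: the $\SRT^2_2\Rightarrow D^2_2$ direction via the limit lemma is fine, and you correctly identify that the thinning step in $D^2_2\Rightarrow\SRT^2_2$ is where $\BSigma^0_2$ is needed (each step of the greedy search must bound finitely many $\Sigma^0_2$ stabilization witnesses, and one should phrase the search in terms of $f$ directly rather than the non-computable function $t$ so that $H$ is $\Delta^0_1$ in $f\oplus H_0$).

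One substantive inaccuracy in your description of the cited material: the contribution of Chong--Lempp--Yang in \cite{PART} is not to push the greedy construction through in a model where $\BSigma^0_2$ fails. Rather, they prove that $D^2_2$ itself implies $\BSigma^0_2$ over $\RCA$ (by showing that in any model where $\BSigma^0_2$ fails there is a $\Delta^0_2$ set witnessing the failure of $D^2_2$). The problematic case is thereby eliminated rather than handled, and the original \cite{CJS} argument under $\BSigma^0_2$ then completes the equivalence. Since you are citing this step as a black box anyway, this does not affect the validity of your overall plan, but the attribution of what is actually proved where should be corrected.
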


\begin{theorem}[Yokoyama \cite{yokoyama-rkl}]\label{thm.rklp-implies-srt22}
$\RKLp$ implies $\Pt$, and hence $\SRT^2_2$, over $\RCA$.  
\end{theorem}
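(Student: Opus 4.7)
\emph{Plan.} Since $\Pt$ trivially implies $D^2_2$ (take $A_0 = A$, $A_1 = \overline{A}$) and $D^2_2$ is equivalent to $\SRT^2_2$ by the cited theorem, it suffices to prove $\RKLp \Rightarrow \Pt$.

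Fix $\Pi^0_2$ sets $A_0, A_1 \subseteq \N$ with $A_0 \cup A_1 = \N$, written as $A_i = \{x : (\forall y)(\exists z)\, R_i(x,y,z)\}$ for bounded $R_i$. The idea is to encode a witness-race between $A_0$ and $A_1$ into a single computable set of strings. Define
\[
w_i(x,n) \;=\; \max\bigl\{y \leq n : (\forall y' \leq y)(\exists z \leq n)\, R_i(x,y',z)\bigr\}
\]
(with value $-1$ if the set is empty), and let $\sigma_n \in 2^n$ be the string with $\sigma_n(x) = 0$ iff $w_0(x,n) \geq w_1(x,n)$. Set $\Sigma = \{\sigma_n : n \geq 1\}$; this is $\Delta^0_1$-definable with exactly one string per length, so applying $\RKLp$ yields an infinite set $H$ homogeneous for a path through $T_\Sigma$ with some color $c \in \{0,1\}$.

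I claim $H \subseteq A_c$. For $x \in H$, homogeneity together with the fact that every $\sigma \in T_\Sigma$ is an initial segment of some $\sigma_m$ with $m \geq |\sigma|$ give $\sigma_m(x) = c$ for arbitrarily large $m$. Assume $c = 0$ and, for contradiction, $x \notin A_0$; pick $y_0$ with $(\forall z)\neg R_0(x, y_0, z)$, so $w_0(x,n) < y_0$ for every $n$. Since $A_0 \cup A_1 = \N$, $x \in A_1$, hence $(\forall y' \leq y_0)(\exists z)\, R_1(x, y', z)$, and by $\BSigma^0_1$ (available in $\RCA$) there is $Z$ with $(\forall y' \leq y_0)(\exists z \leq Z)\, R_1(x, y', z)$. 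Then for every $n \geq \max(y_0, Z)$ we get $w_1(x,n) \geq y_0 > w_0(x,n)$ and hence $\sigma_n(x) = 1$, contradicting $\sigma_m(x) = 0$ for arbitrarily large $m$. The case $c = 1$ is symmetric, using a fixed $y_1$ witnessing $x \notin A_1$ together with $x \in A_0$.

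The main thing to be careful about is the $\RCA$-formalization: a naive argument would invoke ``$x \in A_i \Rightarrow w_i(x,n) \to \infty$,'' which is $\Pi^0_3$, so it is important that the proof only ever needs the unboundedness of $w_i(x,n)$ past the \emph{single} threshold $y_0$ arising from the assumed failure of $A_{1-c}$-membership; this use reduces to $\BSigma^0_1$, which is legitimate in $\RCA$. The only other detail worth checking is that $\Sigma$ containing exactly one string of each length is what lets homogeneity on arbitrarily long elements of $T_\Sigma$ transfer to information about the strings $\sigma_m$ themselves, which is precisely what the race encoding needs.
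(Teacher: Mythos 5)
Your argument is correct and takes essentially the same route as the paper's proof: both encode a witness race between the two $\Pi^0_2$ definitions of $A_0$ and $A_1$ into a $\Delta^0_1$-definable set $\Sigma$ containing exactly one string of each length, apply $\RKLp$ to $T_\Sigma$, and use only $\BSigma^0_1$ to show that the color $c$ of the homogeneous set forces $H\subseteq A_c$. The only (immaterial) difference is that the paper's helper function $h(x,y)$ races the least witness bounds at a fixed verification depth $y$, whereas your $w_i(x,n)$ races verification depths at a fixed witness bound $n$, which makes totality automatic.
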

\begin{proof}
Let $\M=(\N,S(\M))\models\RCA+\RKLp$ and suppose that $A_0,A_1$ are $\Pi^0_2$-definable subsets of $\N$ s.t. $A_0\cup A_1=\N$.  
We will define a $\Delta^0_1$ function $f:[\N]^2\rightarrow\{0,1\}$ s.t. 
	if $f(x,y)=i$ for infinitely many $y$, then $x\in A_i$.

Fix two $\Sigma^0_0$ formulas $\theta_i(x,m,n)$ s.t. $x\in A_i\iff (\forall m)(\exists n)\theta_i(x,m,n)$.
Using these formulas, we define a helper function:
$$h(x,y)=(\mu z)[(\forall m<y)(\exists n<z)[\theta_0(x,m,n)]\lor(\forall m<y)(\exists n<z)[\theta_1(x,m,n)]].$$
Clearly, $h$ is a $\Delta^0_1$ function.  

We must verify in $\RCA$ that $h$ is total.
Fix $x\in\N$ arbitrary.  Then $x\in\N=A_0\cup A_1$, so $x\in A_0$ or $x\in A_1$.  
So $(\forall m)(\exists n)\theta_0(x,m,n)$ or $(\forall m)(\exists n)\theta_1(x,m,n)$.
Let $y\in\N$ be arbitrary.
Suppose $x\in A_i$.  Then $(\forall m)(\exists n)\theta_i(x,m,n)$, so clearly $(\forall m<y)(\exists n)\theta_i(x,m,n)$.
By $\BSigma^0_1$, there is a $z_i$ s.t. $(\forall m<y)(\exists n<z_i)\theta_i(x,m,n)$. 
Thus, $h$ will find a least $z$ s.t. the desired condition holds.

Define $f(x,y)=0$ if $(\forall m<y)(\exists n<h(x,y))[\theta_0(x,m,n)]$, and $f(x,y)=1$ otherwise.
Clearly, $f$ is total since $h$ is total, and $f$ is $\Delta^0_1$ since $h$ is total and $\Delta^0_1$.  
If $f(x,y)=i$ for infinitely many $y$, then our defn of $h(x,y)$ confirms that $x\in A_i$.%

Using $f$, let $\Sigma=\{\sigma_y: \sigma_y\in 2^y\ \land\ (\forall x<y)[\sigma_y(x)=f(x,y)]\}$ and define $T_{\Sigma}$ as before.  
Take  $H$ homogeneous for a path through $T_{\Sigma}$ with some color $c\in\{0,1\}$.

Let $x\in H$ be arbitrary.  
By definition of ``homogeneous for a path through $T_{\Sigma}$ with color $c$,'' there are infinitely many $y$ s.t. $f(x,y)=\sigma_y(x)=c$.
By our choice of $f$, this means that $x\in A_c$.
In other words, $H\subseteq A_c$ is the desired infinite set.
\end{proof}

\begin{question}[Yokoyama \cite{yokoyama-rkl}]
Does $D^2_2$ imply $\Pt$? 
Does $\Pt$ imply $\RKLp$?
\end{question}

\begin{corollary}
$\RKLp$ is incomparable with $\WKL$ over $\RCA$
\end{corollary}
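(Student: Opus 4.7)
The plan for this corollary is to observe that it is an immediate consequence of the two main theorems of this section combined with previously cited separation results, so no new construction is needed. I would just chain the implications already established.

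For the direction $\RKLp \not\vdash \WKL$, I would argue by contrapositive: if $\RCA + \RKLp$ proved $\WKL$, then by Theorem \ref{thm.rt-implies-RKLp} we would also have $\RCA + \RT^2_2 \vdash \WKL$, which directly contradicts Liu's theorem from \cite{liu} stating that $\RT^2_2$ does not imply $\WKL$ over $\RCA$. Hence no model witnessing Liu's separation can satisfy $\RKLp \to \WKL$.

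For the direction $\WKL \not\vdash \RKLp$, I would similarly chain through $\SRT^2_2$: by Theorem \ref{thm.rklp-implies-srt22}, $\RKLp$ implies $\SRT^2_2$ over $\RCA$ (via $\Pt$ and the equivalence of $D^2_2$ with $\SRT^2_2$), so if $\WKL$ implied $\RKLp$ we would obtain $\WKL \vdash \SRT^2_2$. This is ruled out by the $\Delta^0_2$ set construction of \cite{D22-no-low}, which produces a stable coloring of pairs whose homogeneous sets cannot all be found in an $\omega$-model of $\WKL$, hence $\WKL$ does not imply $\SRT^2_2$ over $\RCA$.

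The only real point to check is that the chain of implications genuinely lives in $\RCA$ with no sneaky appeal to $\BSigma^0_2$; since both Theorems \ref{thm.rt-implies-RKLp} and \ref{thm.rklp-implies-srt22} are stated and proved over $\RCA$ alone (indeed Yokoyama's argument was crafted precisely to avoid $\BSigma^0_2$), this is automatic, and there is no substantial obstacle to the proof.
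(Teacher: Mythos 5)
Your proof is correct and takes essentially the same route as the paper: both non-implications are obtained by chaining $\RT^2_2 \Rightarrow \RKLp \Rightarrow \SRT^2_2$ (Theorems \ref{thm.rt-implies-RKLp} and \ref{thm.rklp-implies-srt22}) through Liu's separation of $\RT^2_2$ from $\WKL$ and the known fact that $\WKL$ does not imply $\SRT^2_2$. The only difference is the citation for the latter fact (you use \cite{D22-no-low} together with the low basis theorem, the paper's proof cites Seetapun and Cholak--Jockusch--Slaman), which is immaterial.
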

\begin{proof}
Because $\WKL$ does not imply $\SRT^2_2$ (Theorem 3.3 of \cite{seetapun} and Theorem 10.5 of \cite{CJS}), $\WKL$ does not imply $\RKLp$.
By the main result of \cite{liu}, $\RT^2_2$ does not imply $\WKL$, so $\RKLp$ cannot imply $\WKL$.
\end{proof}

\begin{remark}
Using the above arguments, we can rephrase $\RT^2_2$ as the statement 
``for each $\Sigma$ which contains exactly one string of each length, 
there is an infinite $H$ which is homogeneous (with fixed color $c$) for each $\sigma\in\Sigma$ s.t.\ $|\sigma|\in H$.''
\end{remark}

\begin{question}
Does $\RKLp$ imply $\COH$, $\CAC$, $\ADS$ or $\RT^2_2$?
One implication holds if and only if all implications hold.  Does $\SRT^2_2$ imply $\RKLp$?
\end{question}

%%%% %%%% %%%% %%%%
\section{Arithmetically-definable trees}
%%%% %%%% %%%% %%%%

\begin{statement}[$\RCA$]
$\RKLa$ is the axiom scheme which, for each arithmetic formula $\phi$, asserts that 
	``if $\phi$ defines a tree $T$ containing arbitrarily long strings, 
	there is an infinite set $H$ which is homogeneous for a path through $T$.''
\end{statement}

\begin{theorem} 
Over $\RCA$, we have the following strict implications:
$\ACA$ $\implies$ $\RKLa$ $\implies$ $\RKLp$ $\implies$ $\RKL$.
\end{theorem}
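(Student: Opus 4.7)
The plan is straightforward: each forward implication reduces to repackaging the given tree as a tree of the appropriate form, and strictness follows from results either already established or appearing later in this section. The main point is that $\RKLa$, $\RKLp$, and $\RKL$ are restrictions of the same assertion to trees of decreasing definitional complexity, so the forward implications are essentially free, whereas separating them requires input from elsewhere.

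For $\RKLp \Rightarrow \RKL$, I would observe that any infinite binary tree $T$ is already a downward-closed infinite set of strings, so taking $\Sigma := T$ gives $T_\Sigma = T$ and $\RKLp$ applied to $\Sigma$ immediately yields the conclusion. For $\RKLa \Rightarrow \RKLp$, given an infinite set of strings $\Sigma$, the $\Sigma^0_1$ formula $\phi(\tau) \equiv (\exists \sigma)[\sigma \in \Sigma \wedge \tau \preceq \sigma]$ is arithmetic and defines $T_\Sigma$; since $\Sigma$ is infinite and $2^{<\N}$ has only finitely many strings of any bounded length, $T_\Sigma$ contains arbitrarily long strings, so $\RKLa$ applies. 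For $\ACA \Rightarrow \RKLa$, an arithmetic $\phi$ defining a tree with arbitrarily long strings can be converted to a set via arithmetic comprehension, yielding an infinite binary tree in the sense of $\RCA$; $\WKL$ (a consequence of $\ACA$) then extracts a path $p$, and applying $\RT^1_2$ (a theorem of $\RCA$) to $x \mapsto p(x)$ produces an infinite set $H$ that is homogeneous for each $p \upharpoonright t \in T$, fulfilling Definition \ref{defn.homog-for-path}. This last step essentially replays the earlier argument for $\WKL \Rightarrow \RKL$.

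For strictness I would invoke three facts, one for each arrow. First, $\RKL \not\Rightarrow \RKLp$: by Section 2 we have $\WKL \Rightarrow \RKL$, and by Theorem \ref{thm.rklp-implies-srt22} we have $\RKLp \Rightarrow \SRT^2_2$; were $\RKL \Rightarrow \RKLp$ to hold we would obtain $\WKL \Rightarrow \SRT^2_2$, contradicting the results of \cite{seetapun} and \cite{CJS}. Second, $\RKLp \not\Rightarrow \RKLa$: Theorem \ref{thm.rt-implies-RKLp} gives $\RT^2_2 \Rightarrow \RKLp$, and a forthcoming result in this section shows $\RT^2_2 \not\Rightarrow \RKLa$, so $\RKLp$ cannot imply $\RKLa$. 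Third, $\RKLa \not\Rightarrow \ACA$: a forthcoming result in this section shows that $\RKLa$ does not even imply $\WKL$ (using the main theorem of \cite{liu}), and hence not $\ACA$. There is no essential obstacle in the proof itself; the content lies entirely in the separations, which are cited from the rest of the paper and from the literature.
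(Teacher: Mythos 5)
Your proposal is correct and follows essentially the same route as the paper: the forward implications are the immediate repackagings you describe (the paper simply calls them ``immediate''), and the three separations are exactly the ones the paper uses --- $\RKL\not\Rightarrow\RKLp$ via $\WKL\Rightarrow\RKL$ together with $\RKLp\Rightarrow\SRT^2_2$, $\RKLp\not\Rightarrow\RKLa$ via the low$_2$ model of $\RT^2_2$, and $\RKLa\not\Rightarrow\ACA$ via the $\omega$-model of $\RKLa$ in which $\WKL$ fails. No gaps.
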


The implications are immediate.
We have already seen that the third implication is strict.  We now show that the first two implications are also strict.  We first use the following result from \cite{liu} to separate $\RKLa$ from $\ACA$.

\begin{theorem}[Liu, \cite{liu}]
\label{thm.liu}
For every $C\not \gg\emptyset$ and every coloring $p:\N\rightarrow\{0,1\}$, there exists an infinite set $H$ homogeneous for $p$ such that $H\oplus C\not \gg\emptyset$.
\end{theorem}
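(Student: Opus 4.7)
The first move is to reduce the statement to an infinite-subset avoidance problem. Since $p^{-1}(0) \cup p^{-1}(1) = \N$, at least one of these sets is infinite; call it $A$ and the corresponding color $c$. It then suffices to show: for any infinite $A \subseteq \N$ (with no computability hypothesis on $A$) and any $C \not\gg \emptyset$, there is an infinite $H \subseteq A$ with $H \oplus C \not\gg \emptyset$. Any such $H$ is automatically homogeneous for $p$ with color $c$.

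My plan is to attack this via a Mathias-style forcing. Conditions are pairs $(\sigma, X)$, where $\sigma$ is a finite increasing sequence drawn from $A$ and $X$ is an infinite subset of $A$ with $\min X > \max \sigma$; extensions are $(\sigma \cup F, Y)$ with $F \subseteq X$ finite and $Y \subseteq X \setminus (\max F + 1)$ infinite. A sufficiently generic filter produces an infinite $H \subseteq A$. The substantive work is to show that for each index $e$, the set of conditions forcing ``$\Phi_e^{H \oplus C}$ is not a $\{0,1\}$-valued $\emptyset$-DNR function'' is dense; meeting these requirements together with the ``add another element'' requirements yields the desired $H$ in the usual way.

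The crucial density lemma reads: given $(\sigma, X)$ and $e$, either (a) there is a finite $F \subseteq X$ and an $n$ with $\Phi_e^{(\sigma \cup F)\oplus C}(n) \downarrow = \Phi_n(n)$, in which case the extension $(\sigma \cup F,\, X \setminus (\max F + 1))$ decides requirement $e$ negatively, or (b) one can extract a $C$-computable DNR function, contradicting $C \not\gg \emptyset$. The main obstacle is establishing (b) when (a) fails, because $A$ (hence $X$) need not be $C$-computable, so one cannot enumerate $F \subseteq X$ uniformly in $C$. The resolution I would follow (due to Liu) is to replace the search over $F \subseteq X$ by a search over arbitrary finite $F \subseteq \N$, coupled with a pigeonhole argument: either many $F$'s drive $\Phi_e$ to a common output, which (being distinct from $\Phi_n(n)$ by the failure of (a)) yields a $C$-computable DNR function, or the reservoir splits into two infinite pieces along which the candidate DNR values bifurcate, so one may recurse into one piece and reduce the problem. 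Iterating the splitting, with depth bounded by the range size of a DNR function (i.e.\ two), closes the argument; the detailed counting here is where the real difficulty lies, and the surrounding forcing machinery is standard once this density lemma is in hand.
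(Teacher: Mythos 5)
The paper quotes this theorem from Liu's work without proof, so there is no internal argument to compare against; but your proposal contains a fatal error in its very first step. The reduction to ``for any infinite $A$ and any $C\not\gg\emptyset$ there is an infinite $H\subseteq A$ with $H\oplus C\not\gg\emptyset$'' is a reduction to a \emph{false} statement. There exist infinite sets $A$ with infinite complement such that \emph{every} infinite subset of $A$ has PA degree: for instance, fix $X\gg\emptyset$ and let $A$ be the set of (codes of) finite initial segments of $X$; any infinite $H\subseteq A$ recovers $X$, so $H\gg\emptyset$ and a fortiori $H\oplus C\gg\emptyset$. By fixing one infinite color class at the outset you discard the essential freedom in the theorem, namely that the homogeneous set may be taken inside $p^{-1}(0)$ \emph{or} $p^{-1}(1)$, and which side works cannot be decided in advance. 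Every known proof in this family (Seetapun's cone avoidance, the Dzhafarov--Jockusch treatment of $D^2_2$, and Liu's argument itself) must carry candidate solutions on both sides simultaneously and only commits to a color at the end; a one-sided Mathias forcing over a single reservoir $X\subseteq A$ cannot succeed.

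Beyond that, the part you identify as the ``crucial density lemma'' is exactly where the theorem's actual content lives, and your sketch defers it: the dichotomy in (b) --- either many finite sets drive $\Phi_e$ to a common value or the reservoir bifurcates, with recursion depth bounded by the range of a $\{0,1\}$-valued DNR function --- is a reasonable caricature of Liu's combinatorics, but as written it is not a proof, and you acknowledge that the counting is unresolved. Liu's argument requires a substantially more delicate forcing notion (conditions carrying finite partitions of the reservoir, with a cross-constraint analysis of how a purported $\{0,1\}$-valued DNR functional behaves across the parts) precisely to handle the two-sided issue above together with the non-$C$-computability of the reservoir. So the proposal is not a correct proof: the opening reduction is invalid, and the core lemma is asserted rather than established.
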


\begin{corollary}
There is an $\omega$-model of $\RKLa$ where $\WKL$ fails.
\end{corollary}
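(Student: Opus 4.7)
The plan is to build a countable $\omega$-model $\mathcal{M}$ as the Turing ideal generated by a nested sequence $C_0 = \emptyset \leq_T C_1 \leq_T C_2 \leq_T \cdots$, where each $C_s \not\gg \emptyset$. Setting $\mathcal{M} = \{X : (\exists s)\, X \leq_T C_s\}$ gives a countable $\omega$-model closed under Turing reducibility and recursive join, so $\mathcal{M} \models \RCA$. Since $X \gg \emptyset$ exactly when $X$ computes a completion of Peano arithmetic, and no element of $\mathcal{M}$ does so, the computable $\Pi^0_1$ tree whose paths are the completions of PA will witness $\mathcal{M} \not\models \WKL$. All of the work lies in arranging that $\mathcal{M} \models \RKLa$.

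The construction will proceed in stages, driven by a bookkeeping that enumerates all pairs $(e,j) \in \omega \times \omega$. At stage $s$, I would handle a pair $(e_s, j_s)$ with $j_s \leq s$ as follows: if the arithmetic formula $\phi_{e_s}$ taken with parameter $C_{j_s}$ defines an infinite binary tree $T$, then externally pick any path $p \in [T]$, view $p$ as a coloring $p : \N \to \{0,1\}$, and apply Theorem \ref{thm.liu} with parameter $C_s$ to obtain an infinite $H$ homogeneous for $p$ such that $H \oplus C_s \not\gg \emptyset$; then set $C_{s+1} = H \oplus C_s$. Otherwise set $C_{s+1} = C_s$. This preserves the invariant $C_{s+1} \not\gg \emptyset$ at every stage, and since we always join in $C_s$, the resulting sequence is increasing and cofinal in $\mathcal{M}$.

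To verify $\mathcal{M} \models \RKLa$, suppose $\phi$ is any arithmetic formula whose parameters lie in $\mathcal{M}$ and which defines an infinite binary tree $T$. All of its parameters lie below some single $C_j$, so $T$ can be rewritten as $\phi_e$ with parameter $C_j$ for some index $e$. The bookkeeping handles $(e, j)$ at some stage $s \geq j$, yielding an $H \in \mathcal{M}$ homogeneous for the externally chosen path $p \in [T]$. Since $p \upharpoonright t \in T$ for every $t$ and $H$ is homogeneous for $p$ with some fixed color, Definition \ref{defn.homog-for-path} is satisfied for $H$ with respect to $T$, giving the required witness inside $\mathcal{M}$.

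The main obstacle is simply the bookkeeping: we must ensure that every arithmetic tree that arises from a parameter in the limiting $\mathcal{M}$ is addressed at some finite stage, even though $\mathcal{M}$ itself is only determined in the limit. This is routine because the chain $\{C_j\}$ is cofinal in $\mathcal{M}$, so a standard dovetailing over pairs $(e, j)$ suffices. The genuine combinatorial content is entirely absorbed into Liu's theorem; note in particular that $p$ need not lie in $\mathcal{M}$, and that the use of $C_s$ (which codes all previously-built $C_j$) as the Liu-parameter is exactly what guarantees $H \oplus C_s$, and hence the entire ideal $\mathcal{M}$, avoids the PA degrees throughout the construction.
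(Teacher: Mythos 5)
Your proposal is correct and follows essentially the same route as the paper: iterate Liu's theorem along a cofinal increasing sequence of parameters, choosing for each arithmetically-definable infinite tree an external path $p$ and a homogeneous set $H$ with $H\oplus C_s\not\gg\emptyset$, and dovetail over all (formula, parameter) pairs. Your explicit organization of the bookkeeping as pairs $(e,j)$ and the observation that $p$ need not lie in the model match the paper's argument.
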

\begin{proof}
To build an $\omega$-model $\M=(\omega,S(\M))$ of $\RKLa$, we begin with $S(\M)=REC$ and add sets to $S(\M)$.  
\par
The general strategy for creating a model of $\RKLa$ uses a list of the infinite trees which are arithmetically-definable from any set $X\in S(\M)$.  
For each $i\in \N$, we must ensure that there is some finite stage $s$ where we select a path $p$ through the $i^{th}$ tree $T$, where we select an infinite set $H_s$ homogeneous for $p$, and where we add $H_s$ to $S(\M)$ and close downward under $\leq_T$.  To ensure that $\M\not\models\WKL$, we use Theorem \ref{thm.liu} to select $H_s$ s.t. $H_s\oplus\bigoplus_{j\leq s-1} H_j\not\gg\emptyset$.
\par   
It is possible that adding the set $H_s$ to $S(\M)$ causes new sets to become arithmetically-definable with parameters from $S(\M)$.  
Therefore, each time we add $H_s$ to $S(\M)$, we create a new list containing the trees arithmetically-definable from $\bigoplus_{i\leq s} H_i$.  
We dovetail the lists, eventually running the general strategy for each tree in each list.  
In the limit, we obtain $\M\models\RKLa+\neg\WKL$.
\end{proof}

\begin{corollary}[$\RCA$]
$\RKLa$ does not imply $\WKL$.
\end{corollary}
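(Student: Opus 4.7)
The plan is to invoke the preceding corollary directly. That corollary constructs an $\omega$-model $\M=(\omega,S(\M))\models\RKLa+\neg\WKL$, built by starting with the recursive sets and iteratively closing under the $\RKLa$ requirement using Theorem \ref{thm.liu} to avoid adding any set that computes a PA degree over the previously chosen sets. The key point for the corollary we want is that $\M$ separates $\RKLa$ from $\WKL$ over $\RCA$.

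First I would verify that $\M$ is indeed a model of $\RCA$. Since by construction the second-order part $S(\M)$ is the downward closure under $\leq_T$ of a join-closed family of sets (we close under $\leq_T$ each time we add an $H_s$, and joins are available via the dovetailing), $S(\M)$ is a Turing ideal containing the recursive sets. Any such $\omega$-model satisfies $\RCA$, as $\Delta^0_1$-comprehension and $\Sigma^0_1$-induction hold automatically in every $\omega$-model whose second-order part is closed under relative computability.

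Next I would observe that $\M\models\RKLa$ by construction, and $\M\not\models\WKL$ by the Theorem \ref{thm.liu} step that ensures no element of $S(\M)$ computes a completion of PA. Hence $\M$ witnesses $\RCA+\RKLa\not\vdash\WKL$, which is exactly the statement of the corollary.

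There is no real obstacle: the entire content lies in the construction of $\M$ in the previous corollary, and the present corollary is only repackaging that construction as a non-implication over $\RCA$ via the soundness of $\omega$-models. The only thing to be careful about is to make explicit that $S(\M)$ is a Turing ideal so that $\RCA$ holds in $\M$, but this is immediate from the closure operations performed in the construction.
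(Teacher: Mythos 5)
Your proposal is correct and matches the paper's approach: the corollary is stated without proof precisely because it follows immediately from the preceding corollary's construction of an $\omega$-model of $\RKLa+\neg\WKL$, whose second-order part is a Turing ideal (hence a model of $\RCA$). Your explicit check that $S(\M)$ is closed under join and $\leq_T$ is a reasonable bit of added care but adds nothing beyond what the paper takes as read.
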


We separate $\RKLa$ from $\RKLp$ with an $\omega$-model by the following observation.

\begin{lemma}
For each $n$, no model of $\RKLa$ is bounded by $\emptyset^n$.
\end{lemma}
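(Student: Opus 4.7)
The plan is to relativize the argument that $\RKL$ implies $\DNR$. Fix a model $\M\models\RKLa$ and a natural number $n$; the goal is to exhibit $H\in S(\M)$ with $H\not\leq_T\emptyset^n$. Jockusch's construction from Lemma \ref{lemma.immune-tree} is uniform in its oracle, so relativizing to $\emptyset^n$ produces an infinite $\emptyset^n$-computable tree $T$ with the property that for every $e$, if $|W_e^{\emptyset^n}|\geq e+3$ then there is a stage $t$ beyond which $W_e^{\emptyset^n}$ fails to be homogeneous for any string in $T$ of length greater than $t$. Since $\emptyset^n$ has a canonical arithmetical definition without set parameters, $T$ is itself defined by a fixed arithmetic formula, so the scheme $\RKLa$ applies directly and yields an infinite $H\in S(\M)$ homogeneous for a path through $T$.

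Now suppose for contradiction that $H\leq_T\emptyset^n$. Using this reduction and the $s$-$m$-$n$ theorem relative to $\emptyset^n$, obtain a total $\emptyset^n$-computable function $g$ such that $W_{g(e)}^{\emptyset^n}$ equals the least $e+3$ elements of $H$ in the natural order. By the same dichotomy used in the proof that $\RKL$ implies $\DNR$: if $|W_e^{\emptyset^n}|<e+3$ then cardinality distinguishes $W_e^{\emptyset^n}$ from $W_{g(e)}^{\emptyset^n}$; if $|W_e^{\emptyset^n}|\geq e+3$ then by the relativized Jockusch property $W_e^{\emptyset^n}$ fails to be homogeneous for sufficiently long strings of $T$, while $W_{g(e)}^{\emptyset^n}\subseteq H$ is homogeneous for arbitrarily long strings of $T$. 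Either way $W_e^{\emptyset^n}\neq W_{g(e)}^{\emptyset^n}$, so $g$ is fixed-point-free among $\emptyset^n$-indices, which contradicts Kleene's recursion theorem relativized to $\emptyset^n$. Hence $H\not\leq_T\emptyset^n$, so $\M$ is not bounded by $\emptyset^n$.

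The one point I expect to require care is the definability bookkeeping: we must verify that $T$ is given by a single arithmetic formula with no set parameters drawn from $S(\M)$, so that the instance of the $\RKLa$ scheme is actually available inside $\M$. This is not really an obstacle since Jockusch's construction is uniform in its oracle and $\emptyset^n$ has a fixed $\Sigma^0_n$ description, but it is the reason the argument does not immediately generalize beyond oracles with canonical arithmetic definitions.
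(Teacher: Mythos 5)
Your proof is correct, and its core is the same as the paper's: relativize Jockusch's construction (Lemma \ref{lemma.immune-tree}) to $\emptyset^n$, note that the resulting tree $T$ is arithmetically definable without set parameters, and apply $\RKLa$ inside the model. The difference is in how you derive the contradiction from $H\leq_T\emptyset^n$: you rerun the whole $\DNR$-style argument, building a fixed-point-free $g$ with $W_{g(e)}^{\emptyset^n}$ equal to the least $e+3$ elements of $H$ and then invoking the relativized recursion theorem. That works, but it is a detour. The paper's proof is more direct: every infinite $\emptyset^n$-computable set is $W_e^{\emptyset^n}$ for some $e$, and being infinite it automatically satisfies $|W_e^{\emptyset^n}|\geq e+3$, so the relativized Jockusch property applies to $H$ \emph{itself} and says it is not homogeneous for a path through $T$ --- no auxiliary function $g$ and no recursion theorem needed. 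Your closing remark about the definability bookkeeping is well taken and is exactly the point that makes the instance of the $\RKLa$ scheme available in $\M$.
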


\begin{proof}
 
By the proof of lemma \ref{lemma.immune-tree} relativized to $X=\emptyset^n$, we obtain an $\emptyset^{n}$-computable infinite tree $T$  s.t. no infinite set $W^{\emptyset^n}_e$ is homogeneous for a path through $T$.  Since each $\emptyset^n$-computable set is $W^{\emptyset^n}_e$ for some $e$, it follows that no infinite $\emptyset^n$-computable set is homogeneous for a path through $T$.
\end{proof}

\begin{proposition} 
$\RT^2_2$ does not imply $\RKLa$ over $\RCA$.  
Consequently, $\RKLp$ does not imply $\RKLa$ over $\RCA$.
\end{proposition}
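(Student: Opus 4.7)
The plan is to use the preceding lemma by exhibiting an $\omega$-model of $\RT^2_2$ whose sets are all bounded by some fixed $\emptyset^n$. Then the lemma immediately gives that this model fails $\RKLa$, which yields the first assertion. For the consequence, the implication $\RT^2_2\Rightarrow\RKLp$ from Theorem \ref{thm.rt-implies-RKLp} shows that any proof of $\RKLp\Rightarrow\RKLa$ would deliver $\RT^2_2\Rightarrow\RKLa$, contradicting what we just proved.

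The concrete choice is $n=2$, using the Cholak--Jockusch--Slaman low$_2$ preservation result: every $X$-computable coloring $f:[\N]^2\to\{0,1\}$ has an infinite homogeneous set $H$ with $(H\oplus X)''\le_T X''$. First I would build the desired model $\M$ by the standard iterative procedure. Start with $S_0$ a countable collection of low$_2$ sets closed under $\le_T$ and $\oplus$ (for instance the low$_2$ computable sets, or one could just begin with $REC$). At stage $s+1$, pick an instance $f_s$ (coded by some $X_s\in S_s$) from a list that dovetails over all $f$ coded in $\bigcup_n S_n$, apply the CJS theorem relative to $X_s$ to obtain a homogeneous $H_s$ with $(H_s\oplus X_s)''\le_T X_s''$, and let $S_{s+1}$ be the closure of $S_s\cup\{H_s\}$ under $\le_T$ and $\oplus$. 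A straightforward induction shows every $X_s\in S_s$ is low$_2$, and hence so is $H_s$, and thus so is every set in $S_{s+1}$. Setting $S(\M)=\bigcup_s S_s$ gives $\M\models\RCA+\RT^2_2$, and every set in $S(\M)$ is computable in $\emptyset''$.

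Now I invoke the preceding lemma relativized at $n=2$: there is a $\emptyset''$-computable infinite tree $T$ such that no infinite $\emptyset''$-computable set is homogeneous for any path through $T$. Since $T$ is arithmetically definable from parameters in $S(\M)$ (in fact from $\emptyset''$, which is coded in $\M$ via its low$_2$ representatives), $T$ is a legitimate instance of $\RKLa$ inside $\M$. Any $H\in S(\M)$ witnessing $\RKLa$ for this $T$ would be an infinite $\emptyset''$-computable set homogeneous for a path through $T$, contradicting the lemma. Hence $\M\not\models\RKLa$, so $\RT^2_2\not\vdash\RKLa$ over $\RCA$, and the consequence for $\RKLp$ follows as noted above.

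The main obstacle is verifying that the iterative construction genuinely yields a model all of whose sets are computable in $\emptyset''$ with a single fixed $n$; the key point is that low$_2$-ness is preserved by the CJS solution operator, which requires citing the relativized version of the theorem rather than just its plain form. Once that preservation is in hand, the rest is bookkeeping: ensuring the dovetail covers every coloring ever produced, and that the closure under $\le_T$ and $\oplus$ does not destroy the low$_2$ bound (which is immediate since $\emptyset''$-computable sets form a Turing ideal).
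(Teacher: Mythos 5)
Your proof is correct and follows essentially the same route as the paper: the paper likewise applies the preceding lemma at $n=2$ to a low$_2$ $\omega$-model of $\RT^2_2$, simply citing Theorem 3.1 of Cholak--Jockusch--Slaman for that model rather than reconstructing the iteration as you do. One small slip worth fixing: $\emptyset''$ is \emph{not} coded in $\M$ (it is not low$_2$), but this is harmless since the tree from the lemma is arithmetically definable without set parameters and is therefore a legitimate instance of the $\RKLa$ scheme in $\M$ anyway.
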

\begin{proof}
By the previous lemma, there is no model of $\RKLa$ which is bounded by $\emptyset^2$.  
By Theorem 3.1 of \cite{CJS}, there is an $\omega$-model of $\RT^2_2$ consisting of only $low_2$ sets.  
This model is bounded by $\emptyset^2$ so is not a model of $\RKLa$.  
\end{proof}

\begin{question}
Does $\RKLa$ imply $\COH$ over $\RCA$?    
Equivalently, does $\RKLa$ imply $\RT^2_2$ over $\RCA$?  
\end{question}

%%%% %%%% 
\subsection{Subsets, co-subsets, and trees}
%%%% %%%% 

There is a close relationship between finding subsets/cosubsets of a fixed set, 
	and finding sets that are homogeneous for a path through a fixed tree.

\begin{statement}[$\RCA$]
We define $D^{<\omega}_2$ to be the axiom scheme which asserts $D^n_2$ for each $n\in\omega$.
\end{statement}

\begin{proposition}[$\RCA$]
\label{prop.RKLa-implies-Domega}
$\RKLa$ implies $D^{<\omega}_2$.
\end{proposition}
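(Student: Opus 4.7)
The plan is to adapt Yokoyama's construction from the proof of theorem \ref{thm.rklp-implies-srt22}: given a $\Delta^0_n$-definable subset $A\subseteq \N$, cook up an arithmetically-definable tree whose unique infinite path is $\chi_A$, so that any set homogeneous for a path through this tree must lie entirely in $A$ or entirely in $\overline{A}$.

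Specifically, fix $n\in\omega$ and a $\Delta^0_n$-definable set $A$, and define
\[
T = \{\sigma\in 2^{<\N} : (\forall x < |\sigma|)(\sigma(x) = 1 \leftrightarrow x\in A)\}.
\]
Since the matrix uses the $\Delta^0_n$ relation $x\in A$ preceded only by a bounded quantifier, the defining formula is arithmetic. The set $T$ is clearly closed under taking prefixes, and each initial segment $\chi_A \upharpoonright m$ lies in $T$, so $T$ contains arbitrarily long strings. Applying $\RKLa$ yields an infinite set $H$ and a color $c\in\{0,1\}$ such that $H$ is homogeneous with color $c$ for arbitrarily long $\sigma\in T$.

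The extraction of the subset is immediate: by construction, the unique string in $T$ of length $m$ is $\chi_A \upharpoonright m$. Thus for any $x\in H$, choosing any $\sigma\in T$ of length exceeding $x$ which witnesses homogeneity gives $\chi_A(x) = \sigma(x) = c$. Hence $H \subseteq A$ if $c=1$ and $H\subseteq \overline{A}$ if $c=0$, establishing $D^n_2$. I do not anticipate a real obstacle; the only routine check is confirming the arithmetic complexity of the defining formula for $T$, and the fact that $T$ has a single string at each length makes the color-to-subset translation trivial. The whole argument is a direct analog of the construction used to prove $\RKLp$ implies $\Pt$ in the previous section.
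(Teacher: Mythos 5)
Your proof is correct and is essentially identical to the paper's: both define the arithmetically-definable tree $T=\{\sigma : (\forall x<|\sigma|)[\sigma(x)=1\leftrightarrow x\in A]\}$, observe that its only strings are initial segments of $\chi_A$, and read off the subset or co-subset from the homogeneous color. The framing as an analog of the $\RKLp\Rightarrow\Pt$ argument is apt but the construction itself matches the paper's exactly.
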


\begin{proof}
	Let $\M=(\N,S(\M))\models\RCA+\RKLa$ and suppose that $A$ is a $\Delta^0_n$-definable subset of $\N$.  
We give a $\Pi^0_n$ definition for a tree $T$ as follows.  Given $\tau\in 2^{<\N}$, we say that $\tau\in T$ if and only if $(\forall x<|\tau|)[\tau(x)=1$ if and only if $x\in A]$.
	
	By $\RKLa$, there is a set $H\in S(\M)$ which is homogeneous for arbitrarily long strings in $T$ with color $c\in\{0,1\}$.  Note that the only strings in $T$ are initial segments of $\chi_A$, so $H$ is homogeneous for $\chi_A$ with color $c$.  Then $H\subseteq A$ if $c=1$, and $H\subseteq\overline{A}$ if $c=0$, as desired. 
\end{proof}

\begin{remark}
	For $\omega$-models, the reverse implication also holds.
\end{remark}

\begin{question}
Does $D^{<\omega}_2$ imply $\RKLa$ over $\RCA$?
\end{question}

By results of \cite{CJS}, $\SRT^2_2$ implies $\BSigma^0_2$.

\begin{question}
Are there first order consequences of $\RKLa$ beyond $\BSigma^0_2$?
\end{question}

Chong, Slaman, and Yang have recently announced a proof that $D^2_2$ does not imply $\COH$ over $\RCA$ \cite{separate-rt-srt}. 

\begin{question}
Does $D^n_2$ imply $\COH$ for any $n\in\omega$?
\end{question}

Theorem 2.1 of \cite{coh-not-high} gives another way to state this question for $\omega$-models.

\begin{question}
Is there any arithmetically-definable $f:\N\rightarrow\{0,1\}$ such that any set $H$ homogeneous for $f$ satisfies $H'\gg \emptyset'$?
\end{question}

%%%% %%%% %%%% %%%% 

\begin{figure}%
\centering
\includegraphics{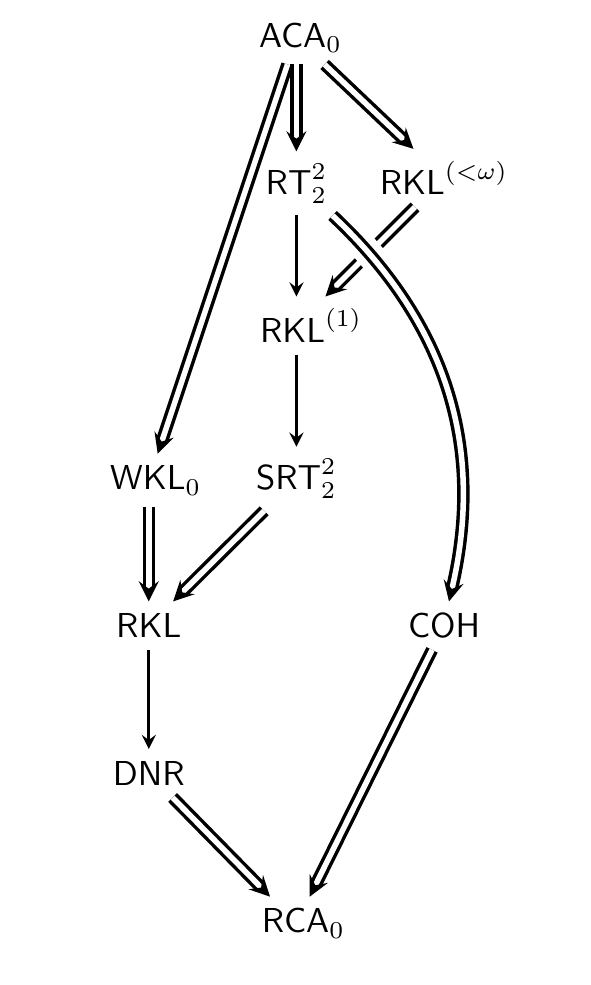}
\caption{A summary of the principles considered.}%
\label{fig.summary}%
\end{figure}

%%%% %%%% %%%% %%%% 

%%%% BibTeX information %%%%
\bibliographystyle{amsplain}
\bibliography{../../../../../BibTeX/flood-bibliography}
%% Cite even though not referenced:
\nocite{CJS-correction}

\end{document}